\theoremstyle{plain}
\newtheorem{thm}{Theorem}[section]
\newtheorem{lem}[thm]{Lemma}
\newtheorem{prop}[thm]{Proposition}
\theoremstyle{definition}
\newtheorem{remark}[thm]{Remark}
\numberwithin{equation}{section}
    \newcommand{\sS}{{\mathscr{S}}}
    \newcommand{\BC}{{\mathbb {C}}}
    \newcommand{\BQ}{{\mathbb {Q}}}
    \newcommand{\BW}{{\mathbb {W}}} \newcommand{\BX}{{\mathbb {X}}}
    \newcommand{\BY}{{\mathbb {Y}}}
     \newcommand{\CR}{{\mathcal {R}}}
    \newcommand{\GU}{{\mathrm{GU}}}
    \newcommand{\GSp}{{\mathrm{GSp}}}
    \newcommand{\GMp}{{\mathrm{GMp}}}
    \newcommand{\GL}{{\mathrm{GL}}}
    \newcommand{\Hom}{{\mathrm{Hom}}}
    \newcommand{\id}{{\mathrm{id}}}
     \newcommand{\ind}{{\mathrm{ind}}}
    \newcommand{\Irr}{{\mathrm{Irr}}}
    \newcommand{\Ker}{{\mathrm{Ker}}}
    \newcommand{\Mp}{{\mathrm{Mp}}}
     \newcommand{\N}{{\mathrm{N}}}
    \newcommand{\pr}{{\mathrm{pr}}}
    \newcommand{\Sp}{{\mathrm{Sp}}}
    \newcommand{\RTr}{{\mathrm{Tr}}}
    \newcommand{\U}{{\mathrm{U}}}
    \newcommand{\wt}{\widetilde}
    \newcommand{\pair}[1]{\langle {#1} \rangle}
    \newcommand{\sk}{\medskip}
    \newcommand{\lra}{\longrightarrow}
    \newcommand{\ra}{\rightarrow}
    \newcommand{\s}{\sk\noindent}
    \numberwithin{equation}{section}
\begin{document}

\title{A note on the local theta correspondence for unitary similitude dual pairs}

\author{Chong Zhang}

\maketitle

\begin{abstract}
Following Roberts' work in the case of orthogonal-symplectic
similitude dual pairs, we study the local theta correspondence for
unitary similitude dual pairs over a $p$-adic field.
\end{abstract}

\section{Introduction}\label{sec: intro}

The local theta correspondence has been well studied for isometry
groups. Meanwhile the theta correspondence for similitude groups is
also important and useful. Roberts \cite{ro} (and others) studied
the local theta correspondence (over a non-archimedean local field)
in the orthogonal-symplectic case. In this short note, under the
assumption ({\bf split}): \emph{one of the hermitian and
skew-hermitian spaces is split}, we show that the method of
\cite{ro} is also valid for the unitary case. We remark that part of
our results may be known to experts, for example, see some works of
M. Harris (e.g. \cite{ha1}). Since we could not find a literature
for the statements and proofs of these results, we insist to write
this note.

After this note was written, Gan informed the author that
\cite[Proposition 3.2]{gt} could show that Howe duality conjecture
for isometry case implies Howe duality conjecture for similitude
case for general dual pairs and the argument of
multiplicity-freeness in \cite{ro} is not needed. Though only the
case of quaternionic unitary groups is treated in \cite[Proposition
3.2]{gt}, it is clear that the argument works generally for other
dual pairs.

In Section \ref{sec: notation}, we recall some notations and
definitions that will be used later. In Section \ref{sec:
splitting}, we study the problem of splitting metaplectic covers of
unitary similitude dual pairs. We show in Proposition \ref{prop
splitting} that this is not always available contrary to the
isometry case. We also show that the preimage of the similitude dual
pairs in the metaplectic cover are not always commute (cf.
Proposition \ref{prop commute}). Actually, the assumption ({\bf
split}) is only essentially used in this section. In Section
\ref{sec: Howe duality}, we study the Howe duality for the
similitude groups. Our main result is Theorem \ref{thm Howe}.

\section{Notations and assumptions}\label{sec: notation}
Let $F$ be a $p$-adic field (i.e. a finite extension field of
$\BQ_p$), and $E$ be a quadratic extension field of $F$.
We will always assume $p\neq2$ in \S4. Let
$\epsilon_{E/F}$ be the quadratic character of $F^\times$ associated
to $E$ by the local class field theory. Denote by $x\mapsto\bar{x}$
the non-trivial Galois automorphism of $E/F$. Choose an
element $\delta\in E^\times$ such that $\bar{\delta}=-\delta$. Then
$\Delta:=\delta^2\in F^\times$ and $\epsilon_{E/F}(x)=(x,\Delta)_F$,
where $(\ ,\ )_F$ is the Hilbert symbol for $F$. We write $\N_{E/F}$
for the norm map from $E^\times$ to $F^\times$. Fix a non-trivial
additive character $\psi:F\ra\BC^\times$, and denote
$\eta=\frac{1}{2}\psi$.

Let $(W,\pair{\ ,\ })$ (resp. $(V,(\ ,\ ))$) be a non-degenerate
skew hermitian (resp. hermitian) space over $E$ with $\dim_EW=n$
(resp. $\dim_EV=m$). Let $H=\GU(W)$ and $G=\GU(V)$ be the
corresponding unitary similitude groups. Recall that $h\in H$
if and only if $h\in\GL_E(W)$ and there exists $\nu\in F^\times$
such that $\pair{xh,yh}=\nu\pair{x,y}$ for all $x,y\in W$.
If $h\in H$, such $\nu$ is unique and will be denoted by $\nu(h)$.
Thus we have the scale map $\nu:H\ra F^\times$. The group $G=\GU(V)$
is defined similarly, and we still use $\nu$ to denote the scale map
$G\ra F^\times$.
Denote $H_1=\U(W)$ and $G_1=\U(V)$, which are the kernels of $\nu$.

Since the distinguish between hermitian space and skew-hermitian
space is not essential, we will assume that ({\bf split}): {\em $W$
is split}. In other words, there is a complete polarization $W=X+Y$,
where $X$ and $Y$ are maximal isotropic subspaces of $W$.

Attached to $(V,W)$, there is a symplectic space $\BW=V\otimes_EW$
over $F$, equipped with the symplectic form $$\pair{\pair{\ ,\
}}=\frac{1}{2}\RTr_{E/F}\left((\ ,\ )\otimes\overline{\pair{\ ,\
}}\right).$$ There is a natural embedding $$\iota:G\times
H\lra\GSp(\BW),\ (v\otimes w)\iota(g,h)=(g^{-1}v\otimes wh),$$ where
$\GSp(\BW)$ is the symplectic similitude group attached to $\BW$
(note that we write the action of $\GSp(\BW)$ or $\Sp(\BW)$ on
right). We denote by $\iota_V:H\ra\GSp(\BW)$ and
$\iota_W:G\ra\GSp(\BW)$ the restrictions of $\iota$ on $H$ and $G$.

Let $\Mp(\BW)$ be the metaplectic cover of $\Sp(\BW)$, which is a
non-trivial $\BC^1$-central extension of $\Sp(\BW)$. For the fixed
additive character $\psi$, let $\omega_\psi$ be the smooth Weil
representation of $\Mp(\BW)$.

For convenience, we recall some notations used for the (strong) Howe
duality (cf. \cite{ro}). Let $A$ and $B$ be two groups of td-type
with countable bases, and $(\rho,S)$ be a smooth representation of
$A\times B$. For a group $J$ of td-type with countable bases, denote
by $\Irr(J)$ the set of equivalence classes of irreducible
admissible representations of $J$. For $\pi\in\Irr(A)$, put
$S(\pi)=S/\bigcap_{f\in\Hom_A(\rho,\pi)}\Ker(f)$. It is a smooth
representation of $A\times B$, and there exists a smooth
representation $\Theta(\pi)$ of $B$, unique up to isomorphism, such
that $S(\pi)\simeq\pi\otimes\Theta(\pi)$ (cf. \cite{mvw}). Denote by
$\CR(A)$ the set of equivalence classes of $\pi\in\Irr(A)$ such that
$S(\pi)\neq0$, and denote $\CR(B)$ analogously. We say that {\em
strong Howe duality} holds for $\rho$ if for every $\pi\in\CR(A)$
the representation $\Theta(\pi)$ has a unique nonzero irreducible
quotient $\theta(\pi)$, and analogous statement holds for every
$\tau\in\CR(B)$. We say that {\em Howe duality} holds for $\rho$ if
the set $\{(\pi,\tau)\in\CR(A)\times\CR(B):\ \Hom_{A\times
B}(\rho,\pi\otimes\tau)\neq0\}$ is the graph of a bijection between
$\CR(A)$ and $\CR(B)$. It is known that strong Howe duality implies
Howe duality.

\section{The splitting}\label{sec: splitting}
Notice that we assume $W$ is split with $\dim_EW=n=2r$, that is
there is a complete polarization $W=X+Y$. Let $\BX=V\otimes_EX$ and
$\BY=V\otimes_EY$. Then $\BX+\BY$ is a complete polarization of
$\BW$. If necessary we write the elements of $H$ or $\GSp(\BW)$ as
matrices with respect to these polarizations.

Let $\GMp(\BW)$ be the metaplectic cover of $\GSp(\BW)$ (cf.
\cite{ba}), which is an extension of $\GSp(\BW)$ by $\BC^1$:
$$1\lra\BC^1\lra\GMp(\BW)\stackrel{\pr}{\lra}\GSp(\BW)\lra1.$$
Then $\GMp(\BW)\simeq\GSp(\BW)\times\BC^1$ as sets, and the group
law can be written as
$$(g,z)\cdot(g',z')=(gg',C(g,g')zz'),$$ where $g,g'\in\GSp(\BW),\
z,z'\in\BC^1$, and $C:\GSp(\BW)\times\GSp(\BW)\ra\BC^1$ is the
cocycle function. The cocycle $C$ can be computed explicitly as
follows (cf. \cite[\S1.2]{ba}). There is an action of $F^\times$ on
$\Sp(\BW)$ defined by $s^y=d(y)^{-1}sd(y)$, where $s\in\Sp(\BW),\
y\in F^\times$ and $d(y)=\begin{pmatrix}\mathbf{1}_{mn}&0\\0&y\cdot
\mathbf{1}_{mn}\end{pmatrix}$. For $g\in\GSp(\BW)$, denote
$g_1=d(\nu(g))^{-1}g\in\Sp(\BW)$. Then we have
\begin{equation}\label{equation cocycle C}C(g,g')=
c_\BY(g_1^{\nu(g')},g'_1)\mu(\nu(g'),g_1),\end{equation} where
$c_\BY(g_1,g'_1)=\gamma_F(\eta\circ L(\BY,\BY {g'_1}^{-1},\BY
g_1))$, and $\mu: F^\times\times\Sp(\BW)\ra\BC^1$ is the functions
defined by
\begin{equation}\label{equation function mu}\mu(y,s)=
(x(s),y)_F\gamma_F(y,\eta)^{j(s)}.\end{equation} For convenience we
recall some of the notations appeared above. For the triple of
Lagrangians $(\BY,\BY {g'_1}^{-1},\BY g_1)$, $L(\BY,\BY
{g'_1}^{-1},\BY g_1)$ is its Leray invariant, which is a quadratic
space over $F$ (cf. \cite{ra} for the definitions).
For a quadratic space $q$ over $F$ and an additive
character $\eta$, $\gamma_F(\eta\circ q)$ is the Weil index
associated to $q$ and $\eta$. For $y\in F^\times$,
$\gamma_F(y,\eta)$ is defined to be $\frac{\gamma_F(y\eta\circ
x^2)}{\gamma_F(\eta\circ x^2)}$, where $y\eta(x):=\eta(yx)$.
Let $\{e_1,...,e_{mn},e_1^*,...,e_{mn}^*\}$ be a basis of $\BW$ such that
$\{e_1,...,e_{mn}\}\subset\BX$,
$\{e_1^*,...,e_{mn}^*\}\subset\BY$,
$\pair{e_i,e_j}=\pair{e^*_i,e^*_j}=0$ and
$\pair{e_i,e_j^*}=\delta_{ij}$ for $1\leq i,j\leq mn$.
Let $P_\BY$ be the parabolic
subgroup of $\Sp(\BW)$ stabilizing $\BY$. Then
$$\Sp(\BW)=\bigsqcup_{j=0}^{mn} P_\BY\tau_j P_\BY,$$ where $\tau_j$ is
defined to be \begin{equation}\label{equation weyl
elements}\begin{aligned} &e_i\cdot\tau_j=-e_i^*,\ &\textrm{and}\quad
e_i^*\cdot\tau_j=e_i,\quad
&\textrm{if }i\in\{1,...,j\},\\
&e_i\cdot\tau_j=e_i,\ &\textrm{and}\quad
e_i^*\cdot\tau_j=e_i^*,\quad &\textrm{otherwise},
\end{aligned}\end{equation}
and $\tau_0:=\id$. Thus for $s=p_1\tau_j p_2\in\Sp(\BW)$ with
$p_1,p_2\in P_\BY$, define $x(s):=\det(p_1p_2|_Y)$ in
$F^\times/F^{\times2}$ and $j(s):=j$.

Now we study the preimages $\pr^{-1}(G)$ and $\pr^{-1}(H)$ in
$\GMp(\BW)$. For this, we need some preparations. Let
$\{e_1,...,e_r,e_1^*,...,e_r^*\}$ be a basis of $W$ such that
$\{e_1,...,e_r\}\subset X$, $\{e_1^*,...,e_r^*\}\subset Y$,
$\pair{e_i,e_j}=\pair{e^*_i,e^*_j}=0$ and
$\pair{e_i,e_j^*}=\delta_{ij}$ for $1\leq i,j\leq r$. Let $P_Y$ be
the parabolic subgroup of $H_1$ stabilizing $Y$. Then we have Bruhat
decomposition $H_1=\bigsqcup_{i=0}^r P_Y\tau_j P_Y$, where $\tau_j$
is defined in the same way as (\ref{equation weyl elements}). For
$h=p_1\tau_jp_2\in H_1$ with $p_1,p_2\in P_Y$, define
$x(h):=\det(p_1p_2|_Y)$ in $E^\times/\N_{E/F}(E^\times)$ and
$j(h):=j$. Recall that, fixing a character
$\chi:E^\times\ra\BC^\times$ such that
$\chi|_{F^\times}=\epsilon^m_{E/F}$, there is a splitting
homomorphism (cf. \cite[\S3]{ku}) $$\wt{\iota}_{V,\chi}:
H_1\lra\Mp(\BW)\simeq\Sp(\BW)\times\BC^1,\quad h\mapsto
\wt{\iota}_{V,\chi}(h)=(\iota_V(h),\beta_{V,\chi}(h)),$$ where
$\beta_{V,\chi}(h)=\chi(x(h))\gamma_F(\eta\circ RV)^{-j(h)}$ and
$RV$ is the underlying $2m$ dimensional $F$ vector space with
quadratic form $\frac{1}{2}\RTr_{E/F}(\ ,\ )$. Recall that we have
the relation:
\begin{equation}\label{equation relation}
c_\BY(\iota_V(h),\iota_V(h'))=\beta_{V,\chi}(h)^{-1}\beta_{V,\chi}(h')^{-1}
\beta_{V,\chi}(hh'),\quad h,h'\in H_1.
\end{equation}
Meanwhile, there is a natural splitting
$$\wt{\iota}_W: G_1\lra \Mp(\BW)\simeq\Sp(\BW)\times\BC^1,
\quad g\mapsto(\iota_W(g),1).$$ Similarly as $\GSp(\BW)$, for $y\in
F^\times$ and $h\in H_1$, define
$$d(y)=\begin{pmatrix}\mathbf{1}_r&0\\0&y\cdot
\mathbf{1}_r\end{pmatrix},\quad\textrm{and}\quad
h^y=d(y)^{-1}hd(y)\in H_1.$$ For $h\in H$, denote
$h_1=d(\nu(h))^{-1}h\in H_1$. The following lemma will be used in
the proof of Proposition \ref{prop splitting}. It can be checked by
linear algebra, and we omit the proof.

\begin{lem}\label{lem linear algebra}
For $h\in H_1$ and $y\in F^\times$, we have
\begin{enumerate}
\item $x(h^y)=x(h)y^{j(h)}$ and $j(h^y)=j(h)$;
\item $x(\iota_V(h))=\N_{E/F}(x(h))^m\cdot(-\Delta)^{mj(h)}$
and $j(\iota_V(h))=2mj(h)$.
\end{enumerate}
\end{lem}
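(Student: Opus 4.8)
The plan is to deduce both statements from the Bruhat decomposition $h=p_1\tau_jp_2$ of an element $h\in H_1$ (with $p_1,p_2\in P_Y$), the remaining work being linear algebra. For part (1), the element $d(y)$ stabilizes $Y$, and conjugation by it preserves $H_1$ since it preserves the scale; hence $d(y)$ normalizes $P_Y$, so $h^y=\bigl(d(y)^{-1}p_1d(y)\bigr)\bigl(d(y)^{-1}\tau_jd(y)\bigr)\bigl(d(y)^{-1}p_2d(y)\bigr)$ with the two outer factors in $P_Y$. A direct computation on the basis $\{e_i,e_i^*\}$ of $W$ shows $d(y)^{-1}\tau_jd(y)=\tau_j a$, where $a\in P_Y$ is the diagonal element with $e_i\mapsto y^{-1}e_i$ and $e_i^*\mapsto ye_i^*$ for $i\le j$, fixing the other basis vectors. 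Thus $h^y=p_1'\tau_jp_2'$ with $p_1',p_2'\in P_Y$, which gives $j(h^y)=j=j(h)$ at once; and since conjugation by $d(y)$ leaves $\det(\,\cdot\,|_Y)$ unchanged while $\det(a|_Y)=y^j$, we get $x(h^y)=\det(p_1'p_2'|_Y)=\det(p_1|_Y)\det(p_2|_Y)y^j=x(h)y^{j(h)}$.

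For the index in part (2), I would use the coordinate-free description $j(s)=\dim_F\bigl((\BY s+\BY)/\BY\bigr)$ for $s\in\Sp(\BW)$ — it is unchanged under multiplying $s$ on either side by an element of $P_\BY$ and equals $j$ for $s=\tau_j$ — together with its analogue $j(h)=\dim_E\bigl((Yh+Y)/Y\bigr)$ for $h\in H_1$. Since $\BY=V\otimes_E Y$ and $\BY\,\iota_V(h)=V\otimes_E(Yh)$, one has $\bigl(\BY\,\iota_V(h)+\BY\bigr)/\BY=V\otimes_E\bigl((Yh+Y)/Y\bigr)$, whose dimension over $F$ is $2m\cdot\dim_E\bigl((Yh+Y)/Y\bigr)$; hence $j(\iota_V(h))=2m\,j(h)$.

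For the invariant $x$ in part (2), the elements $\iota_V(p_1),\iota_V(p_2)$ stabilize $\BY=V\otimes_E Y$, hence lie in $P_\BY$, and act on $\BY$ as $\id_V\otimes(p_\ell|_Y)$. Since $\det_F(\id_V\otimes T)=\N_{E/F}(\det_E T)^m$ for an $E$-linear endomorphism $T$ of $Y$, and since $j(\iota_V(\tau_j))=2mj$ lets us write $\iota_V(\tau_j)=q_1\tau_{2mj}q_2$ with $q_1,q_2\in P_\BY$, we obtain $\iota_V(h)=\bigl(\iota_V(p_1)q_1\bigr)\tau_{2mj}\bigl(q_2\iota_V(p_2)\bigr)$ and therefore $x(\iota_V(h))=\N_{E/F}(x(h))^m\,x(\iota_V(\tau_j))$ in $F^\times/F^{\times2}$. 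It then remains to identify $x(\iota_V(\tau_j))$ with $(-\Delta)^{mj}$.

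For that, write $W=W_0\perp W_0^\perp$ with $W_0=\langle e_1,\dots,e_j,e_1^*,\dots,e_j^*\rangle_E$: the element $\iota_V(\tau_j)$ acts trivially on $V\otimes W_0^\perp$ and, inside $\Sp(V\otimes W_0)$, carries the Lagrangian $V\otimes\langle e_1^*,\dots,e_j^*\rangle$ isomorphically onto the transverse Lagrangian $V\otimes\langle e_1,\dots,e_j\rangle$; so it lies in the big Bruhat cell of $\Sp(V\otimes W_0)$, for which $x$ is given, modulo squares, by the determinant of the lower-left block in a symplectic basis — i.e.\ of the induced isomorphism between these two Lagrangians. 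Expressing that isomorphism in a symplectic basis of $V\otimes W_0$ built from an $F$-basis of $V$ and its dual basis for the symmetric $F$-bilinear form $\tfrac12\RTr_{E/F}\bigl((\ ,\ )\bigr)$ turns it into a block-diagonal matrix with $j$ equal blocks, each the matrix relating those two bases of $V$, so its determinant is $\disc(RV)^j$ modulo squares. Finally, diagonalizing the hermitian form reduces the computation of $\disc(RV)$ to the rank-one case $V=\bigl(E,\,(x,y)\mapsto cx\bar y\bigr)$ with $c\in F^\times$, where $RV$ has Gram matrix $\diag(c,-c\Delta)$ in the basis $\{1,\delta\}$ of $E$, hence discriminant $-\Delta$ modulo squares; so $\disc(RV)\equiv(-\Delta)^m$ and $x(\iota_V(\tau_j))\equiv(-\Delta)^{mj}$, completing part (2). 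The one step that goes beyond routine bookkeeping is this computation of $x(\iota_V(\tau_j))$: one has to choose a symplectic basis of $V\otimes W_0$ adapted to the polarization and verify that passing to it from the natural ``tensor'' basis contributes precisely the discriminant factor $(-\Delta)^{mj}$ of the trace form of $V$.
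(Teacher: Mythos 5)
The paper states this lemma without proof (``it can be checked by linear algebra, and we omit the proof''), so there is no argument of the author's to compare yours against; what you have written is a correct and complete verification. Part (1) and the computation of $j(\iota_V(h))$ are routine and done correctly, and you isolate and handle properly the one genuinely delicate point, namely $x(\iota_V(\tau_j))\equiv\disc(RV)^j\equiv(-\Delta)^{mj}$ modulo squares: the determinant of the induced isomorphism $V\otimes\langle e_1^*,\dots,e_j^*\rangle\to V\otimes\langle e_1,\dots,e_j\rangle$ must indeed be computed in a symplectic basis of $V\otimes W_0$, and your choice is legitimate because $\pair{\pair{u\otimes e_i,\,u'\otimes e_k^*}}=\delta_{ik}\cdot\tfrac12\RTr_{E/F}\bigl((u,u')\bigr)$, so an $F$-basis of $V$ tensored with the $e_i$ together with its trace-form-dual basis tensored with the $e_i^*$ is symplectic; the change-of-basis matrix then contributes $\disc(RV)^{j}$ as you claim, and the rank-one computation giving Gram matrix $\diag(c,-c\Delta)$ and hence $\disc(RV)\equiv(-\Delta)^m$ is right. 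The only inputs you leave implicit are Rao's standard facts: that $x(\cdot)$ mod squares and $j(\cdot)$ are well defined on Bruhat cells, that $j(s)=\dim_F\bigl((\BY s+\BY)/\BY\bigr)$, that $x$ equals the determinant of the lower-left block on the big cell, and that $x$ mod squares is unchanged under replacing the symplectic basis by another one adapted to the same polarization (conjugation by the Levi of $P_\BY$) --- all of which are in the cited reference [Ra], so this is acceptable.
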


\begin{prop}\label{prop splitting}
As an extension of $H$, $\pr^{-1}(H)$ is trivial if $m$ is even. As
an extension of $G$, $\pr^{-1}(G)$ is trivial.
\end{prop}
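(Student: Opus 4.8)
The plan is to construct explicit set-theoretic sections of $\pr^{-1}(G)\to G$ and (when $m$ is even) of $\pr^{-1}(H)\to H$ that are homomorphisms, by combining the known splittings over the isometry subgroups $G_1=\U(V)$ and $H_1=\U(W)$ with a section over a suitable complement given by the scale map. Concretely, recall $\GSp(\BW)\simeq\Sp(\BW)\rtimes F^\times$ via $g\mapsto(g_1,\nu(g))$ where $g_1=d(\nu(g))^{-1}g$, and similarly $H\simeq H_1\rtimes F^\times$ via $h\mapsto(h_1,\nu(h))$; the element $d(y)\in\GSp(\BW)$ (resp. in $H$) realizes the $F^\times$-part. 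For the $G$-case I would first show that $d(y)$, viewed inside $\GSp(\BW)$ through $\iota_W$, lifts canonically: since $d(y)$ normalizes $\iota_W(G_1)$ and acts there compatibly with the known splitting $\wt\iota_W$, one checks using \eqref{equation cocycle C} and \eqref{equation function mu} that the cocycle values $C(\iota_W(g),d(y))$, $C(d(y),\iota_W(g))$ and $C(d(y),d(y'))$ are all trivial (this is where Lemma \ref{lem linear algebra}(1) enters, controlling $x$ and $j$ under the $y$-twist, together with the fact that for $g\in G_1$ one has $x(\iota_W(g))\in\N_{E/F}(E^\times)$ and hence the relevant Hilbert symbols vanish). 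Then $g\mapsto(\iota(1,g),1)\cdot(\text{lift of }d(\nu(g)))$, suitably normalized, is the desired splitting homomorphism of $\pr^{-1}(G)$.

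For the $H$-case with $m$ even, the argument is the same in spirit but now uses the twisted splitting $\wt\iota_{V,\chi}$ and the cocycle relation \eqref{equation relation}. I would define a candidate section $s(h)=\bigl(\iota_V(h_1),\beta_{V,\chi}(h_1)\bigr)\cdot(\text{lift of }d(\nu(h)))$ and verify it is multiplicative. The parity hypothesis on $m$ is exactly what makes the obstruction vanish: by Lemma \ref{lem linear algebra}(2), $x(\iota_V(h))=\N_{E/F}(x(h))^m(-\Delta)^{mj(h)}$ and $j(\iota_V(h))=2mj(h)$, so when $m$ is even the term $(-\Delta)^{mj(h)}$ becomes a norm (indeed a square up to norms) and $\gamma_F(\nu(h),\eta)^{j(\iota_V(h))}=\gamma_F(\nu(h),\eta)^{2mj(h)}$ collapses because $\gamma_F(\cdot,\eta)^2$ is, up to the Hilbert-symbol correction already accounted for, trivial; combined with $\chi|_{F^\times}=\epsilon_{E/F}^m$ being trivial for $m$ even, every discrepancy between $s(hh')$ and $s(h)s(h')$ cancels. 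So one is left to assemble these pieces and check the $2$-cocycle identity holds on the nose.

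The main obstacle I anticipate is purely computational: tracking the interaction of the three separate cocycle contributions — the Leray-invariant/Weil-index term $c_\BY$, the Hilbert-symbol term $(x(s),y)_F$, and the Weil-index term $\gamma_F(y,\eta)^{j(s)}$ in $\mu$ — through the semidirect product decomposition, and confirming that the "mixed" terms $C(\iota(g),d(y))$ genuinely vanish rather than merely lying in some finite subgroup. This requires a careful bookkeeping of which quadratic spaces and which $x$-invariants live in $F^\times/F^{\times 2}$ versus $E^\times/\N_{E/F}(E^\times)$, and a clean statement of how $d(y)$ sits in the Bruhat cell structure (it lies in $P_\BY$, so $j(d(y))=0$ and $x(d(y))=y^{mn}$, which should make $\mu(\nu(g'),d(y)_1)$ manageable). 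Once these identities are pinned down the homomorphism property is immediate, and the triviality of the extension follows because a continuous set-theoretic splitting of a $\BC^1$-central extension that is a homomorphism gives a direct-product decomposition; for the $H$-case when $m$ is odd I would note (perhaps in a remark or the next result) that $\chi|_{F^\times}=\epsilon_{E/F}$ is nontrivial, signalling genuine non-triviality, consistent with the assertion being stated only for $m$ even.
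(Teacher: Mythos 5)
Your overall strategy --- restrict the Barthel cocycle $C$ to the images of $G$ and $H$, control it via Lemma \ref{lem linear algebra} and the splitting data over the isometry groups, and exhibit an explicit homomorphic section --- is the same as the paper's, which computes $C(\iota_V(h),\iota_V(h'))$ and $C(\iota_W(g),\iota_W(g'))$ in closed form. But your $G$-case contains a genuine error. It is not true that the relevant cocycle values are ``all trivial'': for general $g\in G$ one has $\iota_W(g)_1\in P_\BY$ with $x(\iota_W(g)_1)=\N_{E/F}(\det g)^r=\nu(g)^{mr}$, which is \emph{not} a square (nor a norm, which in any case would not kill a general Hilbert symbol $(x,y)_F$ --- only $(x,\Delta)_F$). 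The computation gives
$$C(\iota_W(g),\iota_W(g'))=\mu(\nu(g')^{-1},\iota_W(g)_1)=(\nu(g),\nu(g'))_F^{mr},$$
which is a nontrivial function when $mr$ is odd. Consequently the section you assemble (the naive lift $(\iota_W(g),1)$, possibly corrected by lifts of $d(y)$ with trivial second coordinate) is \emph{not} multiplicative; it fails by exactly this Hilbert symbol. The extension is nonetheless trivial, but for a reason you omit entirely: $(\nu(g),\nu(g'))_F$ is a coboundary, via Weil's relation $(y,y')_F=\gamma_F(y,\eta)^{-1}\gamma_F(y',\eta)^{-1}\gamma_F(yy',\eta)$, so one must take $z(g)=\gamma_F(\nu(g),\eta)^{mr}$ as the second coordinate. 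This step is the actual content of the $G$-half of the proposition and cannot be waved away by saying the symbols vanish on $G_1$ (where indeed $\det g\in E^1$ forces $x=1$, but that does not propagate to all of $G$).

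Two smaller points. First, your decomposition $G\simeq G_1\rtimes F^\times$ is not available: $d(y)$ lives in $\GSp(\BW)$ and in $H$ (because $W$ is split), but there is no such element in $G$, and $\nu(G)$ need not even be all of $F^\times$; you must work with $\iota_W(g)=\,$(its $\Sp$-part)$\cdot d(\nu(g)^{-1})$ inside $\GSp(\BW)$, noting $\nu(\iota_W(g))=\nu(g)^{-1}$ (your ``lift of $d(\nu(g))$'' has the wrong similitude factor as written). Second, for the $H$-case your candidate section $s(h)=(\iota_V(h_1),\beta_{V,\chi}(h_1))\cdot(\text{lift of }d(\nu(h)))$ is essentially the paper's splitting $h\mapsto(\iota_V(h),\beta_{V,\chi}(h_1)^{\pm1})$, and your heuristic for why $m$ even makes the obstruction vanish is correct in outline; but the verification reduces, after using \eqref{equation relation} and both parts of Lemma \ref{lem linear algebra}, to the residual factor $(\N_{E/F}(x(h_1)),\nu(h'))_F^m$, and it is this single symbol (not the $(-\Delta)^{mj}$ or $\gamma_F^{2mj}$ terms, which cancel against each other for every $m$) whose triviality requires $m$ even. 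That computation still needs to be carried out to complete the argument.
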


\begin{proof}
For $h\in H$, it is easy to see that $\iota_V(h)_1=\iota_V(h_1)$ and
$\nu(h)=\nu(\iota_V(h))$. For $h,h'\in H$, by (\ref{equation cocycle
C}) and (\ref{equation relation}), we have
$$
\begin{aligned}C(\iota_V(h),\iota_V(h'))
&=c_{\BY}(\iota_V(h_1)^{\nu(h')},\iota_V(h'_1))\cdot\mu(\nu(h'),\iota_V(h_1))\\
&=c_\BY(\iota_V(h_1^{\nu(h')}),\iota_V(h'_1))\cdot\mu(\nu(h'),\iota_V(h_1))\\
&=\beta_{V,\chi}(h_1^{\nu(h')})^{-1}\beta_{V,\chi}(h'_1)^{-1}
\beta_{V,\chi}(h_1^{\nu(h')}h'_1)\cdot\mu(\nu(h'),\iota_V(h_1)).
\end{aligned}
$$
Since $$h_1^{\nu(h')}h'_1=d(\nu(h'))^{-1}d(\nu(h))^{-1}h
d(\nu(h'))\cdot d(\nu(h'))^{-1}h'=(hh')_1,$$ we have
$\beta_{V,\chi}(h_1^{\nu(h')}h'_1)=\beta_{V,\chi}((hh')_1)$. On the
other hand, by (\ref{equation function mu}) and Lemma \ref{lem
linear algebra},we have
$$\begin{aligned}\mu(\nu(h'),\iota_V(h_1))
&=(x(\iota_V(h_1)),\nu(h'))_F\gamma_F(\nu(h'),\eta)^{j(\iota_V(h_1))}\\
&=(\N_{E/F}(x(h_1)),\nu(h'))^m_F(-\Delta,\nu(h'))_F^{mj(h_1)}
\gamma_F(\nu(h'),\eta)^{2mj(h_1)}\\
&=(\N_{E/F}(x(h_1)),\nu(h'))^m_F(-\Delta,\nu(h'))_F^{mj(h_1)}
(-1,\nu(h'))_F^{mj(h_1)}\\
&=(\N_{E/F}(x(h_1)),\nu(h'))^m_F(\Delta,\nu(h'))_F^{mj(h_1)},\end{aligned}$$
and
$$\begin{aligned}\beta_{V,\chi}(h_1^{\nu(h')})
&=\chi(x(h_1^{\nu(h')}))\gamma_F(\eta\circ RV)
^{-j(h_1^{\nu(h')})}\\
&=\chi(x(h_1)\nu(h')^{j(h_1)})\gamma_F(\eta\circ
RV)^{-j(h_1)}\\
&=\chi(x(h_1))\gamma_F(\eta\circ
RV)^{-j(h_1)}\cdot\epsilon_{E/F}^{mj(h_1)}(\nu(h'))\\
&=\beta_{V,\chi}(h_1)(\nu(h'),\Delta)_F^{mj(h_1)}.\end{aligned}$$
Thus we have
$$\beta_{V,\chi}(h_1^{\nu(h')})^{-1}\mu(\nu(h'),\iota_V(h_1))
=\beta_{V,\chi}(h_1)^{-1}(\N_{E/F}(x(h_1)),\nu(h'))^m_F.$$ It
follows that
$$C(\iota_V(h),\iota_V(h'))=\beta_{V,\chi}(h_1)^{-1}
\beta_{V,\chi}(h'_1)^{-1}\beta_{V,\chi}((hh')_1)
\cdot(\N_{E/F}(x(h_1)),\nu(h'))^m_F.$$ Therefore $\pr^{-1}(H)$ is a
trivial $\BC^1$-extension if $m$ is even.

For $g\in G$, it is easy to see that $\nu(\iota_W(g))=\nu(g)^{-1}$
and $x(\iota_W(g)_1)=\N_{E/F}(\det g)^r$. For $g,g'\in G$, we have
$$\begin{aligned}
C(\iota_W(g),\iota_W(g'))&=c_\BY(\iota_W(g)_1^{\nu(g')^{-1}},\iota_W(g')_1)\cdot
\mu(\nu(g')^{-1},\iota_W(g)_1)\\
&=\mu(\nu(g')^{-1},\iota_W(g)_1)=(\nu(g')^{-1},\N_{E/F}(\det
g)^r)_F\\
&=(\nu(g),\nu(g'))^{mr}_F,
\end{aligned}$$
since $\iota_W(g)_1\in P_\BY$ and $\N_{E/F}(\det g)=\nu(g)^m$. Then
the conclusion follows from the relation
$$(\nu(g),\nu(g'))_F=\gamma_F(\nu(g),\eta)^{-1}\gamma_F(\nu(g'),\eta)^{-1}
\gamma_F(\nu(gg'),\eta).$$
\end{proof}

\begin{prop}\label{prop commute}
$\pr^{-1}(H)$ and $\pr^{-1}(G)$ commute in $\GMp(\BW)$ if and only
if $m$ is even.
\end{prop}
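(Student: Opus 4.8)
The plan is to compute the commutator in $\GMp(\BW)$ of a lift of $\iota_W(g)$ and a lift of $\iota_V(h)$ directly from the cocycle $C$, exactly as in the proofs of Propositions \ref{prop splitting}. Since $\pr^{-1}(G)$ and $\pr^{-1}(H)$ are central extensions by $\BC^1$, the commutator of any two lifts $\wt{g}$ of $\iota_W(g)$ and $\wt{h}$ of $\iota_V(h)$ lies in $\BC^1$ and is independent of the choice of lifts; call it $\{g,h\}\in\BC^1$. Then $\pr^{-1}(H)$ and $\pr^{-1}(G)$ commute if and only if $\{g,h\}=1$ for all $g\in G$, $h\in H$. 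Using the group law $(g,z)\cdot(g',z')=(gg',C(g,g')zz')$ in $\GMp(\BW)$, one computes
\[
\{g,h\}=\frac{C(\iota_W(g),\iota_V(h))\,C(\iota_V(h),\iota_W(g))^{\pm1}}{C(\iota_W(g)\iota_V(h),\,\iota_V(h)^{-1}\iota_W(g)^{-1})\cdots},
\]
but it is cleaner to use the standard fact that for two elements $a=\iota_W(g)$, $b=\iota_V(h)$ of $\GSp(\BW)$ that commute in $\GSp(\BW)$ (which they do, since $\iota$ is a homomorphism on $G\times H$), the commutator of their lifts equals $C(a,b)/C(b,a)$. So the whole problem reduces to computing $C(\iota_W(g),\iota_V(h))$ and $C(\iota_V(h),\iota_W(g))$ and taking the ratio.

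First I would compute $C(\iota_W(g),\iota_V(h))$. Here $\nu(\iota_V(h))=\nu(h)$ and $\iota_W(g)_1=\iota_W(g)\in P_\BY$ (as noted in the proof of Proposition \ref{prop splitting}, since $\iota_W(g)_1$ stabilizes $\BY$), so $j(\iota_W(g))=0$ and by \eqref{equation cocycle C} the term $c_\BY(\cdots)$ is trivial (Leray invariant of a triple with two equal Lagrangians), leaving $C(\iota_W(g),\iota_V(h))=\mu(\nu(h),\iota_W(g)_1)=(x(\iota_W(g)_1),\nu(h))_F=(\N_{E/F}(\det g)^r,\nu(h))_F=(\nu(g),\nu(h))_F^{mr}$, using $\N_{E/F}(\det g)=\nu(g)^m$. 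Next I would compute $C(\iota_V(h),\iota_W(g))$: now $\nu(\iota_W(g))=\nu(g)^{-1}$, and one must expand $c_\BY(\iota_V(h)_1^{\nu(g)^{-1}},\iota_W(g)_1)\,\mu(\nu(g)^{-1},\iota_V(h)_1)$. The $\mu$-factor gives $(x(\iota_V(h)_1),\nu(g)^{-1})_F\gamma_F(\nu(g)^{-1},\eta)^{j(\iota_V(h)_1))}$, which via Lemma \ref{lem linear algebra}(2) and the computations already performed in the proof of Proposition \ref{prop splitting} simplifies to an expression in $(\N_{E/F}(x(h_1)),\nu(g))_F$ and $(\Delta,\nu(g))_F^{mj(h_1)}$. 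For the $c_\BY$ factor, since $\iota_W(g)_1\in P_\BY$, twisting $\iota_V(h)_1$ by the $F^\times$-action and pairing with an element of $P_\BY$ still lands in a single Bruhat cell with the same $j$, so the Leray invariant computation is tractable; I expect this factor to contribute a further Hilbert symbol, and after combining everything the $h_1$-dependent and $g$-dependent pieces should assemble into $(\nu(g),\nu(h))_F^{mr}$ times a correction depending only on $m \bmod 2$.

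The main obstacle will be the careful evaluation of the $c_\BY$ term in $C(\iota_V(h),\iota_W(g))$, i.e. controlling the Leray invariant $L(\BY,\BY\,\iota_W(g)_1^{-1},\BY\,(\iota_V(h)_1^{\nu(g)^{-1}}))$ and its Weil index; this is where one must be genuinely careful about which Bruhat cell $\iota_V(h)_1^{\nu(g)^{-1}}$ lies in and how the parabolic factors interact with $\iota_W(g)_1\in P_\BY$. Once both cocycle values are in hand, the ratio $C(\iota_W(g),\iota_V(h))/C(\iota_V(h),\iota_W(g))$ should reduce — using bilinearity of the Hilbert symbol, $\gamma_F(y,\eta)^2=(-1,y)_F$, and $\epsilon_{E/F}(x)=(x,\Delta)_F$ — to $\bigl(\N_{E/F}(\,\cdot\,),\nu(h)\bigr)_F^{\,m}$-type terms that vanish when $m$ is even (because then every exponent is even) and are genuinely nontrivial for $m$ odd (exhibit a single pair $g,h$ with $\{g,h\}=-1$, e.g. taking $g$ with $\nu(g)$ a non-norm and $h$ a suitable Weyl element so $j(h_1)$ is odd). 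This gives the "if and only if": commutativity $\iff$ all these Hilbert symbols are trivial $\iff m$ even. I would close by remarking that the $m$ odd direction only needs one explicit non-commuting pair, which keeps that half short.
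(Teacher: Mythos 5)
Your strategy is the paper's: since $\iota_W(g)$ and $\iota_V(h)$ commute in $\GSp(\BW)$, the commutator of any lifts is $C(\iota_W(g),\iota_V(h))/C(\iota_V(h),\iota_W(g))$, and both values are evaluated from \eqref{equation cocycle C}. Your computation of $C(\iota_W(g),\iota_V(h))=(\nu(g),\nu(h))_F^{mr}$ matches the paper. However, the step you single out as ``the main obstacle'' --- the factor $c_\BY(\iota_V(h_1)^{\nu(g)^{-1}},\iota_W(g)_1)$ --- is not an obstacle, and your expectation that it ``contributes a further Hilbert symbol'' is wrong: by definition $c_\BY(s,s')=\gamma_F(\eta\circ L(\BY,\BY s'^{-1},\BY s))$, and here the \emph{second} argument $s'=\iota_W(g)_1$ lies in $P_\BY$, so $\BY s'^{-1}=\BY$ and the triple of Lagrangians degenerates exactly as in your first computation; the factor is identically $1$, and no Bruhat-cell analysis of $\iota_V(h_1)^{\nu(g)^{-1}}$ is needed. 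The paper accordingly obtains $C(\iota_V(h),\iota_W(g))=(\N_{E/F}(x(h_1)),\nu(g)^{-1})_F^m(\Delta,\nu(g)^{-1})_F^{mj(h_1)}$ from the $\mu$-factor alone; every exponent in the resulting ratio is a multiple of $m$, which gives the ``if'' direction. Had you carried out your plan as written, you would either have stalled on a nonexistent computation or risked inserting a spurious factor.

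For the ``only if'' direction, the witness you commit to cannot exist: comparing determinants in $(xg,yg)=\nu(g)(x,y)$ gives $\N_{E/F}(\det g)=\nu(g)^m$, so for $m$ odd every $\nu(g)$ lies in $\N_{E/F}(E^\times)$ --- there is no $g\in G$ with $\nu(g)$ a non-norm, and in particular $(\Delta,\nu(g))_F^{mj(h_1)}=\epsilon_{E/F}(\nu(g))^{mj(h_1)}=1$ identically, so making ``$j(h_1)$ odd'' buys nothing. Any non-commuting pair must instead come from $(\nu(g),\nu(h))_F^{mr}$ (using that $\nu(H)=F^\times$ because $W$ is split) or from $(\N_{E/F}(x(h_1)),\nu(g))_F^{m}$, and whether either can be made equal to $-1$ is sensitive to the parity of $r$ and to the restriction of the Hilbert symbol to $\N_{E/F}(E^\times)\times\N_{E/F}(E^\times)$ (which is trivial, for instance, when $E/F$ is unramified and $p$ is odd). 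The paper itself asserts this direction without exhibiting a witness, so the burden is real; your proposal not only leaves it unmet but names the one witness that provably fails.
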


\begin{proof}
For $g\in G$ and $h\in H$, if $\pr(\wt{g})=g$ and $\pr(\wt{h})=h$,
we can write $\wt{g}=(\iota_W(g),z)$ and $\wt{h}=(\iota_V(h),z')$.
It is easy to see that the commutator of $\wt{g}$ and $\wt{h}$ is
$$[\wt{g},\wt{h}]=\frac{C(\iota_W(g),\iota_V(h))}
{C(\iota_V(h),\iota_W(g))}.$$ Then we have
$$\begin{aligned}C(\iota_W(g),\iota_V(h))
&=c_\BY(\iota_W(g)_1^{\nu(h)},\iota_V(h_1))
\mu(\nu(h),\iota_W(g)_1)\\
&=(\N_{E/F}(\det(g)),\nu(h))^r_F\\
&=(\nu(g),\nu(h))^{mr}_F,\end{aligned}$$ and
$$\begin{aligned}C(\iota_V(h),\iota_W(g))
&=c_\BY(\iota_V(h_1)^{\nu(g)^{-1}},\iota_W(g)_1)
\mu(\nu(g)^{-1},\iota_V(h_1))\\
&=\mu(\nu(g)^{-1},\iota_V(h_1))\\
&=(\N_{E/F}(x(h_1)),\nu(g)^{-1})^m_F(\Delta,\nu(g)^{-1})_F^{mj(h_1)}.\end{aligned}$$
Hence $\pr^{-1}(H)$ and $\pr^{-1}(G)$ commute in $\GMp(\BW)$ if and
only if $m$ is even.
\end{proof}

\section{Howe duality}\label{sec: Howe duality}
Let $\omega_\psi$ be the smooth Weil representation of $\Mp(\BW)$.
Using the fixed splitting homomorphisms as in the previous section,
we have the Weil representation $\omega_{\psi,\chi}$ of $G_1\times
H_1$. We simply denote by $\omega$ for $\omega_{\psi}$ or
$\omega_{\psi,\chi}$. Let $\Omega=\ind_{\Mp(\BW)}^{\GMp(\BW)}\omega$
be the compactly induced representation, and recall that there is an
one extra variable Schr$\mathrm{\ddot{o}}$dinger model
$\sS(\BX\times F^\times)$ of $\Omega$ (cf. \cite[\S1.3]{ba}). If
$m=\dim_EV$ is even, by Proposition \ref{prop splitting} and
Proposition \ref{prop commute}, $\Omega$ can be viewed as a
representation of $G\times H$. However, when $m$ is odd, the above
method of constructing representation is not available. For general
$m$, we have the following construction (cf. \cite[\S3]{ro}).

Let $$R=\{\ (g,h)\in G\times H\ |\ \nu(g)=\nu(h)\}$$ be a subgroup
of $G\times H$ and also a subgroup of $\Sp(\BW)$. We can extend the
Weil representation $\omega|_{G_1\times H_1}$ to $R$, and still
denote it by $\omega$. Let
$$H^+=\{\ h\in H\ |\ \nu(h)\in\nu(G)\}.$$ Then $H^+=H$ when $m$ is even;
$[H:H^+]=2$ when $m$ is odd. Let
$$\Omega^+=\ind_{R}^{G\times H^+}\omega$$ be the compactly induced
representation, which is a smooth representation of $G\times H^+$.

\begin{remark} If $m$ is even, we thus have two representations $\Omega^+$
and $\Omega$ of $G\times H$. However it can be shown that
$\Omega^+\simeq\Omega$, by the same arguments of \cite[Proposition
3.5]{ro}.
\end{remark}

\begin{remark}
If $W$ (resp. $V$) is an arbitrary skew-hermitian (resp. hermitian)
space, we can define $H^+$ with respect to $V$ (resp. $G^+$ with
respect to $W$), the subgroup $R$ of $G^+\times H^+$, and the
representation $\Omega^+=\ind_R^{G^+\times H^+}\omega$ in the same
way.
\end{remark}

By such a construction, we can consider the (strong) Howe duality
for the representation $\Omega^+$ of $G\times H^+$.

\begin{thm}\label{thm Howe}
If $p>2$, the strong Howe duality holds for $\Omega^+$.
\end{thm}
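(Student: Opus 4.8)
The plan is to reduce the strong Howe duality for the similitude representation $\Omega^+$ of $G\times H^+$ to the strong Howe duality for the isometry Weil representation $\omega$ of $G_1\times H_1$, following the strategy of \cite{ro} in the orthogonal-symplectic case. The input is the known strong Howe duality for $p$-adic unitary isometry dual pairs (valid for $p>2$), together with the fact, recalled in Section \ref{sec: notation}, that strong Howe duality implies Howe duality; one also uses that the representation $\Omega^+=\ind_R^{G\times H^+}\omega$ is obtained by compact induction from the subgroup $R=\{(g,h):\nu(g)=\nu(h)\}$, which contains $G_1\times H_1$ and meets every $\nu$-fibre.

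First I would analyze the restriction of $\Omega^+$ to $G_1\times H_1$ via Mackey theory: since $G_1\times H_1$ is normal in $R$ with abelian quotient (the common similitude factor), and $R$ is normal in $G\times H^+$ with the quotient again controlled by the scale maps, the restriction of $\Omega^+$ to $G_1\times H_1$ is a direct sum of twists $\omega^t$ of the Weil representation by elements $t$ in a suitable quotient group of $F^\times$; here $\omega^t$ is again a Weil representation (for the additive character $\psi_t$), so strong Howe duality holds for each summand. Next I would set up the ``going up/going down'' dictionary: for $\pi\in\Irr(G)$, decompose $\pi|_{G_1}$ into a finite multiplicity-free sum of $G_1$-representations in a single $G_1$-orbit, and likewise for $H^+$; the key compatibility is that the theta correspondence for isometry groups is equivariant for these twisting actions, i.e. $\theta_\psi(\pi_1^t)=\theta_\psi(\pi_1)^t$ suitably interpreted, which is standard for the unitary Weil representation. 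Using Frobenius reciprocity for the compact induction $\ind_R^{G\times H^+}$ one then expresses $\Hom_{G\times H^+}(\Omega^+,\pi\otimes\tau)$ in terms of $\Hom_{R}(\omega,\cdots)$ and hence in terms of isometry Hom-spaces, and transports the uniqueness of the irreducible quotient $\theta(\pi_1)$ up to the similitude level.

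Concretely, the key steps in order are: (1) describe $R$, $G_1\times H_1\triangleleft R\triangleleft G\times H^+$, and the relevant quotient groups, noting $[H:H^+]\le 2$; (2) prove $\Omega^+|_{G_1\times H_1}\cong\bigoplus_t \omega_t$ over a set of representatives $t$, each $\omega_t$ a Weil representation; (3) invoke strong Howe duality and its equivariance under twisting for the isometry pair $(G_1,H_1)$; (4) for $\pi\in\CR_{G\times H^+}(\Omega^+)$, pick a constituent $\pi_1\subset\pi|_{G_1}$ with $\pi_1\in\CR(\omega)$, let $\tau_1=\theta(\pi_1)$, and produce from it an irreducible representation $\tau=\theta(\pi)$ of $H^+$ by inducing/extending; (5) show $\Theta(\pi)$ has $\tau$ as its unique irreducible quotient, by pushing the analogous statement for $\Theta(\pi_1)$ through the exact functors $\ind_R^{G\times H^+}$ and restriction, and using that restriction from $G\times H^+$ to $G_1\times H_1$ is ``faithful enough'' on the relevant representations (Clifford theory: an irreducible quotient of $\Theta(\pi)$ restricts to a sum of $H_1$-constituents, each of which must be a quotient of some $\Theta(\pi_1^t)$, forcing uniqueness); (6) run the symmetric argument with the roles of $G$ and $H^+$ interchanged.

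The main obstacle I expect is step (5), the Clifford-theoretic bookkeeping: one must control how irreducibility, admissibility, and the ``unique irreducible quotient'' property behave under the chain of normal subgroups $G_1\times H_1\subset R\subset G\times H^+$, in particular ruling out that $\Theta(\pi)$ could acquire extra irreducible quotients not seen at the isometry level — this is exactly the point where in \cite{ro} one invokes either a multiplicity-freeness statement or, as Gan points out via \cite[Proposition 3.2]{gt}, a cleaner direct argument. A secondary technical point is the odd-$m$ case, where $H^+\subsetneq H$ and the extension-of-representations ($\tau_1$ to $\tau$ on $H^+$ versus the two choices differing by the quadratic character of $H/H^+$) must be pinned down; here one checks that $\omega|_R$ already ``sees'' only the $H^+$-part, so no ambiguity arises for $\Omega^+$. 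All the cocycle computations of Section \ref{sec: splitting} enter only to guarantee that $\omega$ genuinely extends to $R$ (as asserted there), so the hard content is representation-theoretic rather than computational.
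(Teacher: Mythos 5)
Your proposal follows essentially the same route as the paper: both reduce to Waldspurger's strong Howe duality for the isometry pair $(G_1,H_1)$ (valid for $p>2$) via Clifford theory along the chain $G_1\times H_1\subset R\subset G\times H^+$, using that the relevant quotients (equivalently $G/E^\times G_1\simeq R/E^\times(G_1\times H_1)\simeq H^+/E^\times H_1$, of order at most $2$) force $\pi|_{H_1}$ to be a multiplicity-free sum of at most two irreducibles, and then importing the arguments of \cite[\S\S4--5]{ro}, with $\theta(\pi)=\bigoplus\theta(\rho_i)$. You also correctly locate the one genuinely delicate point (your step (5)) at exactly the place where the paper invokes Roberts' multiplicity-freeness argument or, alternatively, \cite[Proposition 3.2]{gt}.
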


\begin{proof}
If $p>2$, Waldspurger \cite{wa} proved that the strong Howe duality
holds for $\omega|_{G_1\times H_1}$. The centers of $G,\ H^+$ are
both isomorphic to $E^\times$, and there are isomorphisms:
$$G/E^\times G_1\simeq R/E^\times(G_1\times H_1)\simeq H^+/E^\times
H_1,$$ where we use $E^\times$ to denote the centers of $G$ and
$H^+$, and embed $E^\times$ into $R$ diagonally (then $E^\times$
lies in the center of $R$). Notice that the cardinality of the above
quotient groups is no more than 2. Thus, for $\pi\in\Irr(H^+)$, we
have either $\pi|_{H_1}\simeq\rho_0$ being irreducible or
$\pi|_{H_1}\simeq\rho_1\bigoplus\rho_2$ where $\rho_i\in\Irr(H_1)$
are pairwise inequivalent. Then it can be checked that the arguments
in Section 4 and 5 of \cite{ro} can also be applied to our
situation, and the conclusion follows. More precisely, for
$\pi\in\Irr(H^+)$, if $\pi|_{H_1}\simeq\bigoplus\rho_i$ so that
$\rho_i\in\Irr(H_1)$, then $\theta(\pi)=\bigoplus\theta(\rho_i)$
where $\theta(\rho_i)\in\Irr(G_1)$.
\end{proof}

\begin{remark}
If $W$ is arbitrary (i.e. not necessary split), applying the above
arguments again and noting the key relation $G^+/E^\times G_1\simeq
R/E^\times(G_1\times H_1)\simeq H^+/E^\times H_1$, we can show that
if $p>2$ then the (strong) Howe duality holds for $\Omega^+$.
\end{remark}

As in \cite[\S6]{ro}, when $m$ is odd, we can also consider the
(strong) Howe duality for the compactly induced representation
$$\wt{\Omega}=\ind_R^{G\times H}\omega.$$ As same as the
orthogonal-symplectic case, the (strong) Howe duality for
$\wt{\Omega}$ is equivalent to the theta dichotomy for unitary
isometry dual pairs. We explain this phenomenon briefly. Up to
isometry, there are two different hermitian spaces over $E$ of
dimension $m\geq1$: $V^\pm$ defined by
$$\epsilon(V^\pm)=\epsilon_{E/F}((-1)^{\frac{m(m-1)}{2}}\det
V^\pm)=\pm1.$$ To avoid confusion, we write $\CR(H_1,V^\pm)$ to
emphasize its dependence on $V^\pm$. Applying the same arguments of
\cite[Lemma 6.1]{ro}, we have the following principle for the
(strong) Howe duality for $\wt{\Omega}$.

\begin{prop}
Assume $p>2$. Then the (strong) Howe duality holds for $\wt{\Omega}$
if and only if $\CR(H_1,V^+)\cap\CR(H_1,V^-)=\emptyset$.
\end{prop}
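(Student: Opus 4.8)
The plan is to mimic the proof of \cite[Lemma 6.1]{ro}, decomposing $\wt\Omega = \ind_R^{G\times H}\omega$ according to the index-two subgroup $H^+ \le H$ (when $m$ is odd) and relating it to the two hermitian spaces $V^\pm$. The key observation is that $\wt\Omega = \ind_{G\times H^+}^{G\times H}\Omega^+$, so by transitivity of induction and Mackey-type considerations, an irreducible constituent of $\wt\Omega$ restricted to $G\times H^+$ is built from $\Omega^+(V^+)$ and $\Omega^+(V^-)$: the point is that changing $V$ to the other hermitian space $V^-$ of the same dimension $m$ corresponds, after twisting by the similitude characters, to the ``other half'' of $H$ not seen by $H^+$. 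Concretely, I would first record that for $\pi \in \CR(H_1)$, whether $\pi$ occurs in $\CR(H_1, V^+)$ or $\CR(H_1, V^-)$ (or both) is governed by the value of the central character / the discriminant constraint $\epsilon(V^\pm)$, and that the induced representation $\wt\Omega$ knits together precisely these two pieces.

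Next I would carry out the forward direction. Assume $\CR(H_1,V^+)\cap\CR(H_1,V^-)=\emptyset$. Then for a given $\pi \in \Irr(H)$, its restriction $\pi|_{H_1} = \bigoplus \rho_i$ involves constituents $\rho_i$ that, since they cannot simultaneously lie in $\CR(H_1,V^+)$ and $\CR(H_1,V^-)$, are ``sorted'' cleanly: all the $\rho_i$ that contribute to $\wt\Omega$ do so through a single one of the two spaces. This lets one reduce the (strong) Howe duality for $\wt\Omega$ to the (strong) Howe duality for $\Omega^+$ over $V^+$ and over $V^-$ separately, which is Theorem \ref{thm Howe} (valid since $p>2$). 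One then packages $\theta(\pi) = \bigoplus \theta(\rho_i)$ exactly as in the proof of Theorem \ref{thm Howe}, checking that uniqueness of the irreducible quotient survives because the two spaces do not interfere.

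For the converse, I would argue contrapositively: suppose there exists $\rho \in \CR(H_1,V^+)\cap\CR(H_1,V^-)$. Then $\rho$ occurs in the theta correspondence for both pairs $(\U(V^+),\U(W))$ and $(\U(V^-),\U(W))$, and these two occurrences get glued together inside $\wt\Omega$ (which ``sees'' all of $H$, hence both $V^\pm$ simultaneously via the similitude action) to produce a representation $\Theta(\pi)$ of $G$ with two distinct irreducible quotients, namely the two theta lifts $\theta(\rho, V^+)$ and $\theta(\rho, V^-)$ (which differ, as they land in $\U(V^+)$ versus $\U(V^-)$). This violates uniqueness of the irreducible quotient, so strong Howe duality fails; a parallel but slightly more delicate bookkeeping argument shows ordinary Howe duality fails as well, since the ``graph of a bijection'' property is broken by the doubled lift. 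The main obstacle I expect is the last point: carefully tracking, through the compact induction and the $E^\times$-central twists, that the two lifts $\theta(\rho,V^+)$ and $\theta(\rho,V^-)$ really do appear as \emph{distinct} quotients of the \emph{same} $\Theta(\pi)$ rather than of two unrelated representations — this is where the precise form of the isomorphisms $G^+/E^\times G_1 \simeq R/E^\times(G_1\times H_1) \simeq H^+/E^\times H_1$ and the structure of $\sS(\BX\times F^\times)$ must be used, exactly as in \cite[\S6]{ro}.
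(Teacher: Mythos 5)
Your reconstruction follows the same route the paper intends (the paper itself gives no details beyond citing \cite[Lemma 6.1]{ro}): write $\wt{\Omega}=\ind_{G\times H^+}^{G\times H}\Omega^+$, apply Mackey theory over the index-two subgroup $G\times H^+$, and identify the contribution of the nontrivial coset with the theta correspondence for the other hermitian space. But the mechanism by which that coset ``sees'' $V^-$ should be made explicit, because it is the crux: for $h_0\in H\setminus H^+$, conjugation by $(1,h_0)$ carries $\omega_{V,W}|_{G_1\times H_1}$ to the Weil representation attached to $V^{a}$, the space $V$ with hermitian form rescaled by $a=\nu(h_0)\notin\N_{E/F}(E^\times)$; since $m$ is odd, $\epsilon(V^a)=\epsilon_{E/F}(a)^m\epsilon(V)=-\epsilon(V)$, so $V^a\simeq V^-$. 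This is a rescaling of the form, not a ``twist by similitude characters,'' and it is the only reason $V^-$ enters.

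The genuine gap is in your converse direction: the claim that $\theta(\rho,V^+)$ and $\theta(\rho,V^-)$ are distinct ``as they land in $\U(V^+)$ versus $\U(V^-)$'' cannot be right. For odd $m$ the rescaled space has the \emph{same} unitary and similitude unitary groups as $V$ (that is, $\U(V^+)=\U(V^-)=G_1$ inside $\GL_E(V)$), which is precisely what allows both lifts to occur as quotients of the single representation $\Theta_{\wt{\Omega}}(\pi)$ of the one group $G$; if they landed in genuinely different groups there would be no conflict with Howe duality for the pair $(G,H)$ at all. So the failure of (strong) Howe duality when $\rho\in\CR(H_1,V^+)\cap\CR(H_1,V^-)$ must be argued differently: either by showing the two lifts are inequivalent as $G_1$-representations (which is not automatic), or, as in \cite[\S6]{ro}, by a counting argument --- $\Theta_{\wt{\Omega}}(\pi)|_{G_1}$ surjects onto $\theta(\rho,V^+)\oplus\theta(\rho,V^-)$ together with their conjugates, and comparing this with the number of irreducible constituents permitted by the structure of $\Irr(G)$ over $\Irr(G_1)$ (the quotient $G/E^\times G_1$ having order at most $2$) forces either non-uniqueness of the irreducible quotient or failure of the graph property. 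Without this bookkeeping the ``only if'' direction is not established.
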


\begin{remark}
It is conjectured in \cite{hks} that for $m\leq n$,
$$\CR(H_1,V^+)\cap\CR(H_1,V^-)=\emptyset,$$ which is called theta
dichotomy. Theta dichotomy has been proved (cf. \cite[Theorem
2.1.7]{ha2}) under the assumption of the so called ``weak
conservation relation''. Recently, Sun and Zhu \cite{sz} proved the
so called ``conservation relation'' for all type I irreducible dual
pairs and all local fields of characteristic zero. In summary, theta
dichotomy has been proved. It is also known that for $m\geq 2n$,
$$\CR(H_1,V^+)\cap\CR(H_1,V^-)\neq\emptyset.$$ Thus the Howe duality
does not hold for $\wt{\Omega}$ when $m\geq 2n$.
\end{remark}

\paragraph{Acknowledgements} This paper is part of the author's thesis in
2011. He expresses his sincere gratitude to Professor Linsheng Yin
and Professor Ye Tian for their constant encouragement and support.
The author would like to thank Professor Wee Teck Gan for his
comments and mention of \cite[Proposition 3.2]{gt}, and would like
to thank the referees for their careful readings of the manuscript
and many comments leading to a much improved exposition.

\s{Chong Zhang\\
Academy of Mathematics and System Science, Chinese Academy of
Sciences,\\
Beijing, 100190, PR China.\\
E-mail address: \texttt{zhangchong02@gmail.com}}

\end{document}